\newtheorem{theorem}{Theorem}[section]
\newtheorem{prop}[theorem]{Proposition}
\newtheorem{lem}[theorem]{Lemma}
\newtheorem{coro}[theorem]{Corollary}
\newtheorem{thm}[theorem]{Theorem}
\newtheorem{rem}[theorem]{\rm\textsc{Remark}}
\newcommand{\bslash}{\kern-0.1em\texttt{\scalebox{0.6}[1]{/}}\kern-0.15em \texttt{\scalebox{0.6}[1]{/}}}
\DeclareMathOperator{\GL}{GL}
\DeclareMathOperator{\SL}{SL}
\DeclareMathOperator{\rank}{rank}
\DeclareMathOperator{\orbit}{orbit}
\DeclareMathOperator{\height}{height}
\newcommand{\A}{\mathcal{A}}
\newcommand{\N}{\mathbb{N}} 
\newcommand{\F}{\mathbb{F}}
\newcommand{\ra}{\longrightarrow}
\newcommand{\hbo}{$\hfill\Diamond$}
\begin{document}
\title{Modular matrix invariants under some transpose actions} 
\def\shorttitle{Modular matrix invariants under some transpose actions}

\author{Yin Chen}
\address{School of Mathematics and Physics, Key Laboratory of ECOFM of 
Jiangxi Education Institute, Jinggangshan University,
Ji'an 343009, Jiangxi, China \& Department of Finance and Management Science, University of Saskatchewan, Saskatoon, SK, Canada, S7N 5A7}
\email{yin.chen@usask.ca}

\author{Shan Ren}
\address{School of Mathematics and Statistics, Northeast Normal University, Changchun 130024, China}
\email{rens734@nenu.edu.cn}

\begin{abstract}
Consider the special linear group of degree $2$ over an arbitrary finite field, acting on the full space of $2 \times 2$-matrices by transpose. We explicitly construct a generating set for the corresponding modular matrix invariant ring, demonstrating that this ring is a hypersurface. Using a recent result on $a$-invariants of Cohen-Macaulay algebras, we determine the Hilbert series of this invariant ring, and our method avoids seeking the generating relation. Additionally, we prove that the modular matrix invariant ring of the group of upper triangular $2 \times 2$-matrices is also a hypersurface.
\end{abstract}

\date{\today}
\thanks{2020 \emph{Mathematics Subject Classification}. 13A50.}
\keywords{Modular invariants; matrix invariants; transpose action; finite fields.}
\maketitle \baselineskip=16.3pt

\dottedcontents{section}[1.16cm]{}{1.8em}{5pt}
\dottedcontents{subsection}[2.00cm]{}{2.7em}{5pt}

\section{Introduction}
\setcounter{equation}{0}
\renewcommand{\theequation}
{1.\arabic{equation}}
\setcounter{theorem}{0}
\renewcommand{\thetheorem}
{1.\arabic{theorem}}

\noindent Let $\F_q$ be a finite field of order $q=p^s$ ($s\in\N^+$) and $\GL_n(\F_q)$ be the general linear group of degree $n$ over $\F_q$. Suppose that $G\leqslant \GL_n(\F_q)$ is a subgroup and $M_n(\F_q)$ denotes the space of all $n\times n$ matrices over $\F_q$. Consider the following transpose action of $G$ on $M_n(\F_q)$ defined by
\begin{equation}
\label{trans}
(g,M)\mapsto g\cdot M\cdot g^{t}
\end{equation}
for all $g\in G$ and $M\in M_n(\F_q)$. This action indicates that $M_n(\F_q)$ can be viewed as an $n^2$-dimensional (not necessarily faithful) representation of $G$ over $\F_q$. The main objective of this article is to calculate  
the corresponding modular invariant rings $\F_q[M_2(\F_q)]^G,$
where $G$ denotes the group  $U_2(\F_q)$ of upper triangular matrices of size $2$ and the special linear group $\SL_2(\F_q)$; see for example, \cite{CW11} or \cite{DK15} for general references about modular invariant theory of finite groups.

This study is motivated by the following aspects. Firstly, a fundamental philosophy  in modular invariant theory is to extend key results (or objects) from characteristic zero to the modular setting. For example, the well-studied invariant theory of vectors and covectors in characteristic zero has recently been generalized to the modular case; see \cite{BK11, Che14, Che21, CT19, Ren24}, and \cite{RZ25}. While the matrix invariant theory in characteristic zero is well-developed (\cite{DP17}), research on modular matrix invariants over finite fields remains in its early stages, with very few results available, even for the case $n=2$.
Secondly, for the small prime field $\F_2$, Smith and Stong \cite[Corollary 3.2]{SS10a} have computed the invariant ring $\F_2[M_2(\F_2)]^{\GL_2(\F_2)}$ and showed that it is a hypersurface, 
where the action of $\GL_2(\F_2)$ on $M_2(\F_2)$ is given by the transpose action (\ref{trans}). This work also influences Smith and Stong's subsequent work about Poincar\'{e} duality algebras mod $2$; see  \cite{SS10b}. This means that extending the characteristic 2 case to arbitrary prime characteristics, may lead to more potential applications in algebraic topology. Thirdly, the $a$-invariant theory of invariant rings, which is useful for determining the Hilbert series of an invariant ring, has made some progress in \cite[Theorem 4.4]{GJS25}, revealing when an invariant ring and its underlying polynomial ring share the same $a$-invariants. Recently, this result has been used by \cite{Mai25} to compute the modular matrix invariants of $\GL_2(\F_q)$ acting on $M_2(\F_q)$ by conjugation. Thus, it is natural to explore whether this result can be applied to the calculation of modular matrix invariants under some transpose actions.

To articulate our methods and results, we choose a basis of $M_2(\F_q)$ as follows:
\begin{equation}
\label{eq1.2}
\left\{e_1=\begin{pmatrix}
      1&0    \\
     0 &0  
\end{pmatrix},e_2=\begin{pmatrix}
      0&0    \\
     1 &0  
\end{pmatrix}, e_3=\begin{pmatrix}
      0&1    \\
     0 &0  
\end{pmatrix}, e_4=\begin{pmatrix}
      0&0    \\
     0 &1 
\end{pmatrix}\right\}.
\end{equation}
Let $M_2^*(\F_q)$ denote the dual space of $M_2(\F_q)$ and choose a basis $\{x_1,x_2,x_3,x_4\}$ of $M_2^*(\F_q)$, dual to $\{e_1,e_2,e_3,e_4\}$. We may identify the symmetric algebra $\F_q[M_2(\F_q)]$ on $M_2^*(\F_q)$ with
the polynomial ring $\F_q[x_1,x_2,x_3,x_4]$.  For any $G\leqslant\GL_2(\F_q)$, the matrix invariant ring 
$\F_q[M_2(\F_q)]^G$ denotes the subring of $\F_q[M_2(\F_q)]$ consisting of all polynomials fixed under the action of $G$.
The ring $\F_q[M_2(\F_q)]^G$ has Krull dimension 4, meaning its generating set requires at least 4 elements. 

In Section \ref{sec2}, we compute the invariant ring $\F_q[M_2(\F_q)]^{U_2(\F_q)}$ and determine its algebraic structure and algebraic properties. Note that $U_2(\F_q)$ is of order $q$ and actually it is isomorphic to a Sylow $p$-subgroup of any group $H$ between $\SL_2(\F_q)$ and $\GL_2(\F_q)$, we see that $\F_q[M_2(\F_q)]^{U_2(\F_q)}$ is factorial and the Cohen-Macaulayness of $\F_q[M_2(\F_q)]^{U_2(\F_q)}$ (Corollary \ref{coro1}) implies that $\F_q[M_2(\F_q)]^{H}$ is also Cohen-Macaulay. 
To find a generating set of $\F_q[M_2(\F_q)]^{U_2(\F_q)}$, we construct five $U_2(\F_q)$-invariants $\{f_1,f_2,f_3,f_4,\upzeta\}$ and show that there exists a subgroup $\widetilde{U}_2(\F_q)$ of $\GL_4(\F_q)$ containing  the image of $U_2(\F_q)$ in $\GL_4(\F_q)$ such that
$[\widetilde{U}_2(\F_q): U_2(\F_q)]=2$ and 
\begin{equation}
\label{ }
\F_q[M_2(\F_q)]^{\widetilde{U}_2(\F_q)}=\F_q[f_1,f_2,f_3,f_4]
\end{equation}
 is a polynomial algebra (Proposition \ref{prop2.2}). This means that 
$\F_q[M_2(\F_q)]^{U_2(\F_q)}$ is a free module of rank $2$ over $\F_q[M_2(\F_q)]^{\widetilde{U}_2(\F_q)}$. 
The  last new invariant $\upzeta$ can be determined by computing the Hilbert series of $\F_q[M_2(\F_q)]^{U_2(\F_q)}$ and applying \cite[Theorem 4.4]{GJS25}. The main results of this section are Theorem \ref{thm1} and Corollary \ref{coro1}, which demonstrate that 
\begin{equation}
\label{ }
\F_q[M_2(\F_q)]^{U_2(\F_q)}=\F_q[f_1,f_2,f_3,f_4,\upzeta]
\end{equation}
is also a hypersurface.
The advantage of our method is that it avoids the need to find the generating relation among $\{f_1,f_2,f_3,f_4,\upzeta\}$.

Section \ref{sec3} is devoted to computing the invariant ring $\F_q[M_2(\F_q)]^{\SL_2(\F_q)}$ for $p\geqslant 3$ and $q\neq 3$, with a remark about the case $q=3$ (Remark \ref{rem3.4}) and a discussion of the case $p=2$ at the end.
We observe that the two $U_2(\F_q)$-invariants $f_2$ and $f_3$ are also fixed by the action of $\SL_2(\F_q)$. We construct three new  $\SL_2(\F_q)$-invariants: $g_0, g_1, g_2$. Our construction method is inspired by \cite[Section 2]{HS11}. The same procedure outlined in Section \ref{sec2} can be applied to $\F_q[M_2(\F_q)]^{\SL_2(\F_q)}$.
We eventually prove in Theorem \ref{thm2} that 
\begin{equation}
\label{ }
\F_q[M_2(\F_q)]^{\SL_2(\F_q)}=\F_q[f_2,f_3,g_0,g_1,g_2]
\end{equation}
 is a hypersurface as well.

It is worth noting that MAGMA calculations \cite{BCP97} report that the cardinality of a generating set of $\F_q[M_2(\F_q)]^{\GL_2(\F_q)}$ 
will increase as $q$ increases for the transpose action, while \cite[Corollary 5.6]{Mai25} asserts that the invariant ring of  $\GL_2(\F_q)$ acting on 
$M_2(\F_q)$ by conjugation can be generated by five invariants. Moreover,
\cite{Ang98} and \cite{Smi02} studied modular matrix invariants of degree 2 under the conjugation action and see \cite{CZ23} for an application of modular conjugate matrix invariants in understanding the geometry of matrix equations over finite fields. Some works also explore hypersurfaces in invariant theory in a more conceptual manner; see \cite{Nak83, CH99, HK05} and \cite{Bro06}.

\section{$U_2(\F_q)$-Invariants} \label{sec2}
\setcounter{equation}{0}
\renewcommand{\theequation}
{2.\arabic{equation}}
\setcounter{theorem}{0}
\renewcommand{\thetheorem}
{2.\arabic{theorem}}

\noindent For any element $c\in\F_q$, we define $\updelta_c:=\begin{pmatrix}
    1  &   c \\
     0 &  1
\end{pmatrix}\in U_2(\F_q)$ and suppose $
M:=\begin{pmatrix}
     a_1 & a_3   \\
     a_2 & a_4 
\end{pmatrix}\in M_2(\F_q)
$
denotes a generic matrix.  Then
\begin{equation}
\label{ }
\updelta_c\cdot M\cdot \updelta_c^{t}=\begin{pmatrix}
    1  &   c \\
     0 &  1
\end{pmatrix}\begin{pmatrix}
     a_1 & a_3   \\
     a_2 & a_4 
\end{pmatrix}\begin{pmatrix}
    1  &   0 \\
     c &  1
\end{pmatrix}=\begin{pmatrix}
  a_1+ca_2+ca_3+c^2a_4    &  a_3+ca_4  \\
   a_2+ca_4   &a_4  
\end{pmatrix}.
\end{equation}
The resulting matrix $[\updelta_c]$ of $\updelta_c$ on the vector space $M_2(\F_q)$ with respect to $\{e_1,e_2,e_3,e_4\}$ is
\begin{equation}
\label{ }
[\updelta_c]=\begin{pmatrix}
    1  &  0 & 0&0\\
    c  &1 &0&0\\
    c  &0& 1&0\\
    c^2  &c& c&1\\
\end{pmatrix}.
\end{equation}
Thus with respect to the dual basis $\{x_1,x_2,x_3,x_4\}$, the resulting matrix of $\updelta_c$ on $M_2^*(\F_q)$  is
\begin{equation}
\label{ }
([\updelta_c]^t)^{-1}=\begin{pmatrix}
    1  &  -c & -c&c^2\\
    0  &1 &0 &-c\\
    0  &0& 1&-c\\
    0  &0&0&1\\
\end{pmatrix}.
\end{equation}
Note that $U_2(\F_q)=\{\updelta_c\mid c\in\F_q\}$. Hence, the action of $U_2(\F_q)$ on $\F_q[M_2(\F_q)]=\F_q[x_1,x_2,x_3,x_4]$ can be given by 
\begin{equation} 
\begin{aligned}
\updelta_c(x_1) &=& x_1 \hspace{1cm}&& \updelta_c(x_2) &=& x_2-cx_1\hspace{2.15cm} \\
\updelta_c(x_3) &=& x_3-cx_1 && \updelta_c(x_4) &=& x_4-cx_3-cx_2+c^2x_1
\end{aligned}
\end{equation}
and clearly, we see that this action is faithful.

Given a polynomial $f\in \F_q[M_2(\F_q)]$, we write $\orbit(f)$ for the orbit of $f$ under the action of $U_2(\F_q)$.
It can be directly verified that the following polynomials are $U_2(\F_q)$-invariants:
$$f_1:= x_1 \hspace{1cm}
f_2:= x_2-x_3\hspace{1cm}
f_3:=x_1x_4-x_2x_3$$
and 
\begin{equation} 
\begin{aligned}
f_4&:=\prod_{\ell\in\orbit(x_4)}\hspace{-5mm}\ell ~=\prod_{c\in\F_q}(x_4-cx_3-cx_2+c^2x_1)\\
\upzeta&:=\prod_{\ell\in\orbit(x_3)} \hspace{-2mm}\ell ~=\prod_{c\in\F_q}(x_3-cx_1)=x_3^q-x_1^{q-1}x_3
\end{aligned}
\end{equation}
Note that $f_3$ is the determinant function exactly. Thus it is also an $\SL_2(\F_q)$-invariant.
Moreover, we define the following family of $U_2(\F_q)$-invariants:
\begin{equation}
\label{ }
h_a:=\prod_{\ell\in\orbit(x_4-ax_1)}\hspace{-5mm}\ell ~=\prod_{c\in\F_q}(x_4-cx_3-cx_2+(c^2-a)x_1)
\end{equation}
for all $a\in\F_q$. Apparently, $h_0=f_4$.  These polynomials $h_a$ will be used to construct new $\SL_2(\F_q)$-invariants in Section \ref{sec3}.

\begin{lem}\label{lem2.1}
The set $\{f_1,f_2,f_3,f_4\}$ is a homogeneous system of parameters.
\end{lem}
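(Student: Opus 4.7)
The plan is to use the standard algebraic-geometric criterion: four homogeneous elements of positive degree in the polynomial ring $\F_q[x_1,x_2,x_3,x_4]$ (which has Krull dimension $4$) form a homogeneous system of parameters if and only if their only common zero in $\overline{\F_q}^{\,4}$ is the origin. So I would reduce the claim to showing that $V(f_1,f_2,f_3,f_4)=\{0\}$.

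The verification is a cascade. First, $f_1=x_1=0$ forces $x_1=0$. Next, $f_2=x_2-x_3=0$ gives $x_3=x_2$. Substituting these into $f_3=x_1x_4-x_2x_3$ leaves $-x_2^2=0$, so $x_2=x_3=0$. Finally, substituting $x_1=x_2=x_3=0$ into
\[
f_4=\prod_{c\in\F_q}\bigl(x_4-cx_3-cx_2+c^2x_1\bigr)
\]
collapses it to $x_4^{\,q}$, whence $x_4=0$.

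Since the $f_i$ are manifestly homogeneous (of degrees $1,1,2,q$ respectively), the criterion applies and the conclusion follows. There is really no obstacle here: the statement is effectively a one-line cascade after choosing the right order $f_1,f_2,f_3,f_4$ of elimination. The only point that deserves a brief comment is the appeal to the Nullstellensatz-style criterion for homogeneous systems of parameters (see e.g.\ the preliminaries in \cite{CW11} or \cite{DK15}), which converts the dimension-theoretic statement into the variety computation just carried out.
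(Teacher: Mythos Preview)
Your proof is correct and follows essentially the same argument as the paper: both invoke the criterion (stated there as \cite[Lemma~2.6.3]{CW11}) that four homogeneous polynomials in four variables form a homogeneous system of parameters provided their common zero locus over $\overline{\F}_q$ is the origin, and then carry out the identical cascade $f_1\Rightarrow a_1=0$, $f_2\Rightarrow a_2=a_3$, $f_3\Rightarrow a_2=a_3=0$, $f_4\Rightarrow a_4^q=0$.
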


\begin{proof}
We will use \cite[Lemma 2.6.3]{CW11} to prove this statement. Let $\overline{V}:=M_2(\F_q)\otimes_{\F_q}\overline{\F}_q$ be the $4$-dimensional vector space over an algebraic closure $\overline{\F}_q$ of $\F_q$ and suppose $v=(a_1,a_2,a_3,a_4)\in \overline{V}$ denotes an arbitrary point in the algebraic set defined by $\{f_1,f_2,f_3,f_4\}$. Since $f_1(v)=f_2(v)=0$, it follows that
$a_1=0$ and $a_2=a_3$. Together with $f_3(v)=0$, we see that $a_2=a_3=0$. Note that
$$0=f_4(v)=\prod_{c\in\F_q}(a_4-ca_3-ca_2+c^2a_1)=\prod_{c\in\F_q}a_4=a_4^q.$$
Thus $a_4=0$ and hence, $v=0$. Now \cite[Lemma 2.6.3]{CW11} applies.
\end{proof}

We will see that the polynomial algebra $\F_q[f_1,f_2,f_3,f_4]$ is actually the invariant ring of a group 
$\widetilde{U}_2(\F_q)$ that contains $U_2(\F_q)$ as a subgroup. 

Define
\begin{equation}
\label{ }
\upalpha:=\begin{pmatrix}
    1  & 0&0&0   \\
  0  & 0&-1&0   \\
    0  & -1&0&0   \\
     0   & 0&0&1   \\
\end{pmatrix}\in \GL_4(\F_q)
\end{equation}
and define $\widetilde{U}_2(\F_q)$ as the subgroup of $\GL_4(\F_q)$ generated by the image of $U_2(\F_q)$ on $M_2(\F_q)$ and $\upalpha$.
It is clear that the order of $\upalpha$ is 2 and $|\widetilde{U}_2(\F_q)|=2q$.

\begin{prop}\label{prop2.2}
$\F_q[M_2(\F_q)]^{\widetilde{U}_2(\F_q)}=\F_q[f_1,f_2,f_3,f_4]$.
\end{prop}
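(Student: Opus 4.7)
The plan is to establish the two inclusions. For the inclusion $\F_q[f_1,f_2,f_3,f_4]\subseteq \F_q[M_2(\F_q)]^{\widetilde{U}_2(\F_q)}$, since the $U_2(\F_q)$-invariance of the $f_i$ is already clear, only $\upalpha$-invariance must be checked. The matrix $\upalpha$ is symmetric and involutive, so the induced action on the dual basis $\{x_1,x_2,x_3,x_4\}$ is given by the same matrix, yielding $\upalpha(x_1)=x_1$, $\upalpha(x_2)=-x_3$, $\upalpha(x_3)=-x_2$, $\upalpha(x_4)=x_4$. Direct substitution immediately shows that $\upalpha$ fixes each of $f_1=x_1$, $f_2=x_2-x_3$, and $f_3=x_1x_4-x_2x_3$; for $f_4$ the reindexing $c\mapsto -c$ in the defining product $\prod_{c\in\F_q}(x_4-cx_3-cx_2+c^2x_1)$ gives $\upalpha(f_4)=f_4$.

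For the reverse inclusion, the strategy is to combine a degree count in the function field with a normality argument, avoiding the need to compute any syzygies. By Lemma \ref{lem2.1}, $\{f_1,f_2,f_3,f_4\}$ is a homogeneous system of parameters in the Cohen--Macaulay polynomial ring $A:=\F_q[x_1,x_2,x_3,x_4]$, so the $f_i$ are algebraically independent, the subalgebra $R:=\F_q[f_1,f_2,f_3,f_4]$ is itself polynomial, and $A$ is a free $R$-module of rank $\deg(f_1)\deg(f_2)\deg(f_3)\deg(f_4)=1\cdot 1\cdot 2\cdot q=2q$. Passing to the fraction fields $L:=\F_q(f_1,f_2,f_3,f_4)$ and $K:=\F_q(x_1,x_2,x_3,x_4)$ therefore gives $[K:L]=2q$. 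On the other hand, $\widetilde{U}_2(\F_q)\leqslant \GL_4(\F_q)$ acts faithfully on $A$ and hence on $K$, so Artin's theorem yields $[K:K^{\widetilde{U}_2(\F_q)}]=|\widetilde{U}_2(\F_q)|=2q$. Together with the inclusion $L\subseteq K^{\widetilde{U}_2(\F_q)}$ already established, this forces $L=K^{\widetilde{U}_2(\F_q)}$.

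To finish, take any invariant $h\in \F_q[M_2(\F_q)]^{\widetilde{U}_2(\F_q)}$. Then $h\in K^{\widetilde{U}_2(\F_q)}=L$, and $h\in A$ is integral over $R$ because $A$ is a finite $R$-module. Since $R$ is a polynomial algebra, it is integrally closed in its fraction field $L$, so $h\in R=\F_q[f_1,f_2,f_3,f_4]$, as required. The only delicate point in the whole argument is that everything is calibrated to the numerical coincidence $\prod_i\deg(f_i)=|\widetilde{U}_2(\F_q)|=2q$; this is precisely why the involution $\upalpha$ is introduced, and the order of $\widetilde{U}_2(\F_q)$ is recorded in the paragraph immediately preceding the proposition.
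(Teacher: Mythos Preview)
Your proof is correct and rests on the same two pillars as the paper's: the $\upalpha$-invariance check and the numerical coincidence $\prod_i\deg(f_i)=|\widetilde{U}_2(\F_q)|=2q$ combined with Lemma~\ref{lem2.1}. The only difference is packaging: the paper dispatches the reverse inclusion in one line by citing the standard criterion \cite[Proposition~16]{Kem96} (equivalently \cite[Corollary~3.1.6]{CW11}), whereas you reprove that criterion from scratch via the field-degree count $[K:L]=2q=[K:K^{\widetilde{U}_2(\F_q)}]$ and the integral closedness of the polynomial subring $R$. Your route is more self-contained and would be preferable in a context where the reader might not have those references at hand; the paper's route is shorter and keeps the focus on the new content. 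Mathematically they are the same argument.
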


\begin{proof}
First of all, we note that $\upalpha$ fixes $x_1$ and $x_4$, and $\upalpha(x_2)=-x_3$ and $\upalpha(x_3)=-x_2$.
Using this action, one checks directly that $f_1,f_2,$ and $f_3$ are $\widetilde{U}_2(\F_q)$-invariants. To see
$f_4$ is also fixed by $\widetilde{U}_2(\F_q)$, it suffices to verify that $\upalpha(f_4)=f_4$. In fact,
\begin{eqnarray*}
\upalpha(f_4)&=&\upalpha\left(\prod_{c\in\F_q}(x_4-cx_3-cx_2+c^2x_1)\right)\\
&=&\prod_{c\in\F_q}(x_4+cx_2+cx_3+c^2x_1)=\prod_{c\in\F_q}(x_4-(-c)x_2-(-c)x_3+(-c)^2x_1)\\
&=&\prod_{-c\in\F_q}(x_4-(-c)x_2-(-c)x_3+(-c)^2x_1)=f_4.
\end{eqnarray*}
Hence, $\F_q[f_1,f_2,f_3,f_4]\subseteq \F_q[M_2(\F_q)]^{\widetilde{U}_2(\F_q)}$. 
Moreover, note that
$$\deg(f_1)\cdot \deg(f_2)\cdot \deg(f_3)\cdot \deg(f_4)=2q=|\widetilde{U}_2(\F_q)|$$
and $\{f_1,f_2,f_3,f_4\}$ is a homogeneous system of parameters by Lemma \ref{lem2.1}. Thus 
it follows from \cite[Proposition 16]{Kem96} (or \cite[Corollary 3.1.6]{CW11}) that $\F_q[M_2(\F_q)]^{\widetilde{U}_2(\F_q)}=\F_q[f_1,f_2,f_3,f_4]$ is a polynomial algebra over $\F_q$.
\end{proof}

\begin{rem}\label{rem2.3}{\rm
Note that $\upzeta$ is not fixed under the action of $\widetilde{U}_2(\F_q)$ because $\upalpha(\upzeta)\neq \upzeta$.
\hbo}\end{rem}

We are ready to prove the main result in this section. 

\begin{thm}\label{thm1}
$\F_q[M_2(\F_q)]^{U_2(\F_q)}=\F_q[f_1,f_2,f_3,f_4,\upzeta]$. 
\end{thm}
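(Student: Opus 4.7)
The inclusion $\F_q[f_1,f_2,f_3,f_4,\upzeta] \subseteq \F_q[M_2(\F_q)]^{U_2(\F_q)}$ is immediate since every generator is a $U_2(\F_q)$-invariant. Write $R_0 := \F_q[f_1,f_2,f_3,f_4]$, $R := \F_q[M_2(\F_q)]^{U_2(\F_q)}$, and $S := \F_q[f_1,f_2,f_3,f_4,\upzeta]$. The plan is to show $H(R,t) = H(S,t)$ and thus obtain the reverse inclusion.

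First I would exploit the $\upalpha$-action from Proposition \ref{prop2.2}: since $R_0 = R^{\langle \upalpha \rangle}$ with $|\upalpha| = 2$ (so $\upalpha$ normalizes $U_2(\F_q)$ and acts on $R$), both $\upzeta + \upalpha(\upzeta)$ and $\upzeta \cdot \upalpha(\upzeta)$ are $\upalpha$-fixed elements of $R$ and hence lie in $R_0$; a direct calculation using $\upalpha(x_2) = -x_3$, $\upalpha(x_3) = -x_2$, together with the Frobenius identity $(x_2 \mp x_3)^q = x_2^q \mp x_3^q$ in characteristic $p$, expresses the former explicitly in terms of $f_1$ and $f_2$. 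Since $\upzeta \notin R_0$ (Remark \ref{rem2.3}) and $\upzeta$ satisfies a monic quadratic over $R_0$, we obtain $S = R_0[\upzeta] = R_0 \oplus R_0\,\upzeta$ as a free $R_0$-module of rank $2$, with Hilbert series
\[ H(S, t) \;=\; \frac{1 + t^q}{(1-t)^2(1-t^2)(1-t^q)}. \]

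Next I would compute $H(R,t)$ by combining \cite[Theorem 4.4]{GJS25} with Cohen-Macaulayness of $R$. By Lemma \ref{lem2.1}, $\{f_1,f_2,f_3,f_4\}$ is an hsop with degree product $2q = |\widetilde{U}_2(\F_q)|$, so the hypotheses of \cite[Theorem 4.4]{GJS25} apply and $R$ shares the $a$-invariant $-4$ with the ambient polynomial ring $\F_q[x_1,\ldots,x_4]$. For Cohen-Macaulayness, I would apply the Ellingsrud--Skjelbred depth estimate for modular $p$-group invariants: a direct check shows $\dim M_2(\F_q)^{U_2(\F_q)} = 2$, so $\mathrm{depth}\,R \geq \min\{4,\,2+2\} = 4$, whence $R$ is Cohen-Macaulay. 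Writing $H(R,t) = P(t)/[(1-t)^2(1-t^2)(1-t^q)]$ with $P(t) \in \N[t]$ (non-negativity by Cohen-Macaulayness), the $a$-invariant relation $\deg P - (1+1+2+q) = -4$ forces $\deg P = q$; the rank condition gives $P(1) = 2$; and the normalization gives $P(0) = 1$. These constraints together force the unique solution $P(t) = 1 + t^q$.

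Therefore $H(R,t) = H(S,t)$, and the inclusion $S \subseteq R$ upgrades to equality $R = S$. The main obstacle is the Hilbert series extraction: the $a$-invariant supplied by \cite[Theorem 4.4]{GJS25} controls only the degree of $P(t)$, so the argument requires coefficient positivity coming from Cohen-Macaulayness of $R$, together with the rank and normalization constraints, to pin down $P(t) = 1 + t^q$. Establishing Cohen-Macaulayness of $R$ via the Ellingsrud--Skjelbred depth bound (or an equivalent modular $p$-group result) is the main technical input, and verifying the precise hypotheses of \cite[Theorem 4.4]{GJS25} in the present setting is the subtle point of the argument.
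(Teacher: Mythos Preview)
Your approach matches the paper's: Cohen--Macaulayness via the depth bound for $p$-groups (the paper cites \cite[Corollary~3.9.2]{CW11}, which is the Ellingsrud--Skjelbred estimate you invoke), the $a$-invariant from \cite[Theorem~4.4]{GJS25}, and a Hilbert-series comparison between $R$ and $S=R_0\oplus R_0\,\upzeta$.

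One correction: the hypothesis you need for \cite[Theorem~4.4]{GJS25} is that the image of $U_2(\F_q)$ in $\GL_4(\F_q)$ contains \emph{no pseudoreflections}---the paper checks this by observing $\rank([\updelta_c]-I_4)=2$ for every $c\neq 0$---not the hsop degree-product identity you cite. That degree-product identity is what drives Proposition~\ref{prop2.2} (via \cite[Corollary~3.1.6]{CW11}), but it plays no role in the $a$-invariant step, so your sentence ``so the hypotheses of \cite[Theorem~4.4]{GJS25} apply'' is mis-aimed. Your extraction of $P(t)=1+t^q$ from the four constraints $P(0)=1$, $P(1)=2$, $\deg P=q$, $P\in\N[t]$ is correct; the paper shortens this by first invoking \cite[Corollary~3.1.4]{CW11} to obtain the rank-$2$ Hironaka decomposition $R=R_0\oplus R_0\cdot s$ directly (so $P(t)=1+t^{\deg s}$ is immediate), after which the $a$-invariant alone gives $\deg s=q$.
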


\begin{proof}
The first fact we need to prove is that $\F_q[M_2(\F_q)]^{U_2(\F_q)}$ is Cohen-Macaulay. Note that for $c\in\F_q^{\times}$,
$\rank([\updelta_c]-I_4)=2$. Thus the fixed subspace of $U_2(\F_q)$ on $M_2(\F_q)$ has dimension $2$. By \cite[Corollary 3.9.2]{CW11}, it follows that $\F_q[M_2(\F_q)]^{U_2(\F_q)}$ is Cohen-Macaulay.

By Lemma \ref{lem2.1} and \cite[Corollary 3.0.6]{CW11}, it follows that $\{f_1,f_2,f_3,f_4\}$ is a homogeneous system of parameters for $\F_q[M_2(\F_q)]^{U_2(\F_q)}$. Thus $\F_q[M_2(\F_q)]^{\widetilde{U}_2(\F_q)}=\F_q[f_1,f_2,f_3,f_4]$ is a Noether normalization of $\F_q[M_2(\F_q)]^{U_2(\F_q)}$ by Proposition \ref{prop2.2}. 
Namely, $\F_q[M_2(\F_q)]^{U_2(\F_q)}$ can be viewed as a free $\F_q[M_2(\F_q)]^{\widetilde{U}_2(\F_q)}$-module. 
It follows from \cite[Corollary 3.1.4]{CW11} that the rank of this free module is
$$\frac{|\widetilde{U}_2(\F_q)|}{|U_2(\F_q)|}=\frac{2q}{q}=2.$$
Thus, we may write the corresponding Hironaka decomposition of $\F_q[M_2(\F_q)]^{U_2(\F_q)}$ as
\begin{equation}
\label{ }
\F_q[M_2(\F_q)]^{U_2(\F_q)}=\F_q[M_2(\F_q)]^{\widetilde{U}_2(\F_q)}\oplus\F_q[M_2(\F_q)]^{\widetilde{U}_2(\F_q)}\cdot  s
\end{equation}
where $s\notin \F_q[M_2(\F_q)]^{\widetilde{U}_2(\F_q)}$ denotes a homogeneous $U_2(\F_q)$-invariant. Then the Hilbert series of $\F_q[M_2(\F_q)]^{U_2(\F_q)}$ can be written as
\begin{equation}
\label{ }
\mathcal{H}\left(\F_q[M_2(\F_q)]^{U_2(\F_q)}, \uplambda\right)=\frac{1+\uplambda^{\deg(s)}}{(1-\uplambda)(1-\uplambda)(1-\uplambda^2)(1-\uplambda^q)}.
\end{equation}
Since $\F_q[M_2(\F_q)]^{U_2(\F_q)}$ is a graded Cohen-Macaulay algebra, by a theorem of Serre (see \cite[Theorem 4.4.3]{BH93}), we see that the $a$-invariant of $\F_q[M_2(\F_q)]^{U_2(\F_q)}$ is equal to $\deg(s)-(q+4)$. On the other hand,
the fact that $\rank([\updelta_c]-I_4)\leqslant 1$ if and only if $c=0$ in $\F_q$, implies that the image of $U_2(\F_q)$
on $M_2(\F_q)$ contains no reflections. By \cite[Theorem 4.4]{GJS25}, we see that the $a$-invariants of $\F_q[M_2(\F_q)]^{U_2(\F_q)}$ and $\F_q[M_2(\F_q)]$ coincides. Note that the latter is $-4$. Hence,
$$\deg(s)-(q+4)=-4$$
which implies that $\deg(s)=q$ and so the Hilbert series of $\F_q[M_2(\F_q)]^{U_2(\F_q)}$ is
$$\mathcal{H}\left(\F_q[M_2(\F_q)]^{U_2(\F_q)}, \uplambda\right)=\frac{1+\uplambda^{q}}{(1-\uplambda)(1-\uplambda)(1-\uplambda^2)(1-\uplambda^q)}.$$

Let $N$ be the submodule of $\F_q[M_2(\F_q)]^{U_2(\F_q)}$ generated by $\{1,\upzeta\}$ over $\F_q[M_2(\F_q)]^{\widetilde{U}_2(\F_q)}$. We observe that $\{1,\upzeta\}$ is linearly independent over $\F_q[M_2(\F_q)]^{\widetilde{U}_2(\F_q)}$  because if there were two elements $\upalpha_1,\upalpha_2\in \F_q[M_2(\F_q)]^{\widetilde{U}_2(\F_q)}$, not both zero, such that
$\upalpha_1+\upalpha_2\cdot \upzeta=0$, then $\upzeta=-\frac{\upalpha_1}{\upalpha_2}$ will be an $\widetilde{U}_2(\F_q)$-invariant.  This contradicts with Remark \ref{rem2.3}. Thus, $N$ is a free module and we may write 
$$N=\F_q[M_2(\F_q)]^{\widetilde{U}_2(\F_q)}\oplus\F_q[M_2(\F_q)]^{\widetilde{U}_2(\F_q)}\cdot  \upzeta.$$
Clearly, 
$$\mathcal{H}\left(N, \uplambda\right)=\frac{1+\uplambda^{q}}{(1-\uplambda)(1-\uplambda)(1-\uplambda^2)(1-\uplambda^q)}=\mathcal{H}\left(\F_q[M_2(\F_q)]^{U_2(\F_q)}, \uplambda\right).$$
Therefore, 
$\F_q[M_2(\F_q)]^{U_2(\F_q)}=\F_q[M_2(\F_q)]^{\widetilde{U}_2(\F_q)}\oplus\F_q[M_2(\F_q)]^{\widetilde{U}_2(\F_q)}\cdot  \upzeta=\F_q[f_1,f_2,f_3,f_4,\upzeta].$
\end{proof}

\begin{coro}\label{coro1}
$\F_q[M_2(\F_q)]^{U_2(\F_q)}$ is a hypersurface. 
\end{coro}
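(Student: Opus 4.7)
The plan is to upgrade the Hironaka decomposition obtained inside the proof of Theorem \ref{thm1} into an explicit single-relation presentation. That decomposition exhibits $\F_q[M_2(\F_q)]^{U_2(\F_q)}$ as a free module of rank $2$ over the polynomial subring $\F_q[f_1,f_2,f_3,f_4]$ with basis $\{1,\upzeta\}$. In particular, the element $\upzeta^{2}$ lies in the invariant ring and therefore admits a unique expansion
$$\upzeta^{2}=A+B\upzeta,\qquad A,B\in \F_q[f_1,f_2,f_3,f_4].$$
Lift $A,B$ to polynomials $\widetilde A,\widetilde B\in \F_q[y_1,y_2,y_3,y_4]$ through the isomorphism $\F_q[y_1,\ldots,y_4]\cong \F_q[f_1,\ldots,f_4]$, set $R:=\F_q[y_1,y_2,y_3,y_4,z]$ and $F:=z^{2}-\widetilde B(y)\,z-\widetilde A(y)$, and consider the evaluation surjection $\pi\colon R\twoheadrightarrow \F_q[M_2(\F_q)]^{U_2(\F_q)}$ sending $y_i\mapsto f_i$ and $z\mapsto \upzeta$. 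By construction $F\in\ker(\pi)$.

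The central step is to prove that $F$ is irreducible in $R$. Since $F$ is monic of degree $2$ in $z$ over the UFD $\F_q[y_1,\ldots,y_4]$, Gauss's lemma together with integral closedness of the UFD reduces irreducibility of $F$ in $R$ to the nonexistence of a root of $F$ in $\F_q[y_1,\ldots,y_4]$. Such a root $\alpha(y)$ would give an element $\alpha(f_1,\ldots,f_4)\in \F_q[f_1,\ldots,f_4]$ satisfying the same equation $z^{2}-Bz-A=0$ as $\upzeta$; because the invariant ring is a domain, $\upzeta$ would have to coincide with this root, forcing $\upzeta\in \F_q[f_1,\ldots,f_4]=\F_q[M_2(\F_q)]^{\widetilde U_2(\F_q)}$. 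This contradicts Remark \ref{rem2.3}, so $F$ is irreducible.

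Consequently $(F)$ is a height-$1$ prime in the UFD $R$, and $R/(F)$ is an integral domain of Krull dimension $4$. The surjection $\pi$ factors as
$$R\twoheadrightarrow R/(F)\twoheadrightarrow \F_q[M_2(\F_q)]^{U_2(\F_q)},$$
and the second arrow is a surjection of finitely generated integral $\F_q$-algebras of Krull dimension $4$. By the catenary dimension formula $\dim(R/(F))=\dim(\mathrm{image})+\height(\ker)$ for finitely generated algebras over a field, any nonzero kernel would have height at least $1$ and produce an image of dimension at most $3$, a contradiction. Hence the kernel is zero, $\F_q[M_2(\F_q)]^{U_2(\F_q)}\cong R/(F)$, and the invariant ring is a hypersurface.

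The only genuine obstacle is the irreducibility of $F$, and it is dispatched cleanly by Remark \ref{rem2.3}; everything else assembles mechanically from the rank-$2$ Hironaka decomposition already produced in the proof of Theorem \ref{thm1} together with standard dimension theory.
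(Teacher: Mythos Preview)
Your argument is correct. One tiny wording issue: when you say ``$\upzeta$ would have to coincide with this root,'' note that a quadratic can have two roots; but if $F$ has a root $\alpha$ in $\F_q[y_1,\dots,y_4]$ then the other root $\widetilde B-\alpha$ also lies there, so $\upzeta$ equals one of the two and the contradiction with Remark~\ref{rem2.3} still follows. This is cosmetic and does not affect the validity of the proof.

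Your route differs from the paper's. The paper argues abstractly: the evaluation map $\pi\colon S=\F_q[X_1,\dots,X_4,Y]\to\F_q[M_2(\F_q)]^{U_2(\F_q)}$ is onto a domain, so $\ker(\pi)$ is prime; by the dimension count $\height(\ker(\pi))=5-4=1$; and since $S$ is a UFD, every height-one prime is principal. No relation is ever written down and no irreducibility is checked. Your approach instead exploits the rank-$2$ Hironaka decomposition to \emph{construct} the quadratic relation $F=z^2-\widetilde Bz-\widetilde A$ explicitly, proves it irreducible via Remark~\ref{rem2.3}, and then matches dimensions to conclude $\ker(\pi)=(F)$. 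The paper's argument is shorter and works verbatim whenever one already knows an $(n{+}1)$-generated domain of dimension $n$; yours is more hands-on and has the bonus of telling you that the defining relation is monic quadratic in $\upzeta$ over $\F_q[f_1,\dots,f_4]$, information the paper's proof does not supply.
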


\begin{proof} 
Let $S:=\F_q[X_1,\dots,X_4, Y]$ denote a polynomial ring in five variables over $\F_q$ and consider the standard $\F_q$-algebra homomorphism $\uppi: S\ra \F_q[M_2(\F_q)]^{U_2(\F_q)}$ defined by sending $Y$ to $\upzeta$ and $X_i$ to $f_i$  for $i=1,\dots,4$, respectively. By Theorem \ref{thm1}, we see that $\uppi$ is surjective. Hence, 
$$S/\ker(\uppi)\cong \F_q[M_2(\F_q)]^{U_2(\F_q)}$$
in which the latter is an integral domain, so $\ker(\uppi)$ is a prime ideal of $S$. 
As $S$ is an integral domain of Krull dimension $5$ and the Krull dimension of $S/\ker(\uppi)$ is $4$, it follows from \cite[Corollary 13.4]{Eis95} that $\height(\ker(\uppi))=\dim(S)-\dim(S/\ker(\uppi))=5-4=1.$
Since $S$ is Noetherian and factorial, it follows from \cite[Theorem 20.1]{Mat89} that
$\ker(\uppi)$ is principal, i.e., generated by an irreducible element $Q$, we say. This also means that
$\F_q[M_2(\F_q)]^{U_2(\F_q)}$ is generated by $\{f_1,f_2,f_3,f_4,\upzeta\}$, subject to
the unique generating relation $Q(f_1,f_2,f_3,f_4,\upzeta)=0$. Therefore, it is a hypersurface. 
\end{proof}

\section{$\SL_2(\F_q)$-Invariants} \label{sec3}
\setcounter{equation}{0}
\renewcommand{\theequation}
{3.\arabic{equation}}
\setcounter{theorem}{0}
\renewcommand{\thetheorem}
{3.\arabic{theorem}}

\noindent Throughout this section we suppose $p\geqslant 3$ and $q\neq 3$.  Recall that the group $U_2(\F_q)$ can be viewed as a Sylow $p$-subgroup of $\SL_2(\F_q)$ and $\SL_2(\F_q)$ can be generated by $U_2(\F_q)$ and the following element 
$$\uptau:=\begin{pmatrix}
    0  &   1 \\
     -1 &  0
\end{pmatrix}.$$
Note that the resulting matrix of $\uptau$ on the vector space $M_2(\F_q)$ is
\begin{equation}
\label{ }
[\uptau]=\begin{pmatrix}
    0  &  0 & 0&1\\
    0  &0 &-1 &0\\
    0  &-1& 0&0\\
    1  &0&0&0\\
\end{pmatrix}
\end{equation}
and $[\uptau]=([\uptau]^t)^{-1}.$ Thus 
\begin{equation}
\label{ }
\uptau(x_1)=x_4,\uptau(x_4)=x_1,\uptau(x_2)=-x_3,\textrm{ and }\uptau(x_3)=-x_2.
\end{equation}

We observe that the group homomorphism $\upvarphi:\SL_2(\F_q)\ra\GL_4(\F_q)$ induced by the action of $\SL_2(\F_q)$ on $M_2(\F_q)$ is not injective. In fact, the kernel of $\upvarphi$ is generated by $-I_2$ (the negative identity matrix),
and so $|\ker(\upvarphi)|=2$. This also implies that the order of the image of $\SL_2(\F_q)$ on $M_2(\F_q)$ is
$$|\upvarphi(\SL_2(\F_q))|=\frac{|\SL_2(\F_q)|}{2}=\frac{q^3-q}{2}.$$

We write $\A_0$ for the set of quadratic residues in $\F_q$ (i.e., $\A_0=\{c^2\mid c\in\F_q^\times\}$) and denote by $\A_1$  the set of quadratic non-residues (i.e., $\A_1=\F_q^\times\setminus\A_0$). Thus
$|\A_0|=|\A_1|=\frac{q-1}{2}$.

Consider a new $U_2(\F_q)$-invariant of degree $q$:
$$f_0:=\prod_{\ell\in\orbit(x_2+x_3)}\hspace{-3mm}\ell ~~=\prod_{c\in\F_q}(x_3+x_2-2cx_1)=(x_3+x_2)^q-(x_3+x_2)x_1^{q-1}$$
and two known $U_2(\F_q)$-invariants $f_2 = x_2-x_3$ and $f_3=x_1x_4-x_2x_3$ already appeared in  Section \ref{sec2}. Let us define the following three new polynomials:
\begin{equation} 
\begin{aligned}
g_0&:=x_1^qx_4 + x_1x_4^q - x_2^qx_3 -x_2x_3^q\\
g_1&:=\prod_{a\in\A_1}h_a\\
g_2&:=f_0\cdot\prod_{a\in\A_0}h_a.
\end{aligned}
\end{equation}

These five polynomials above are all $\SL_2(\F_q)$-invariants.

\begin{lem}\label{lem3.1}
$f_2,f_3,g_0,g_1,g_2\in\F_q[M_2(\F_q)]^{\SL_2(\F_q)}$.
\end{lem}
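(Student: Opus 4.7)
The strategy is to use $\SL_2(\F_q)=\langle U_2(\F_q),\uptau\rangle$, so that it suffices to check $U_2(\F_q)$- and $\uptau$-invariance separately. The polynomials $f_2$ and $f_3$ are $U_2(\F_q)$-invariants by Section~\ref{sec2}, and their $\uptau$-invariance follows from a one-line substitution using $\uptau(x_1)=x_4$, $\uptau(x_4)=x_1$, $\uptau(x_2)=-x_3$, $\uptau(x_3)=-x_2$; indeed $f_3$ is the determinant of the generic matrix, hence manifestly $\SL_2(\F_q)$-invariant.

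For $g_0$, I would use the decomposition $g_0 = A + \uptau(A)$ with $A := x_1^q x_4 - x_2^q x_3$. Since $\uptau^2=-I_2$ acts trivially on $M_2(\F_q)$ via $M\mapsto (-I_2)M(-I_2)^t = M$, this decomposition forces $\uptau(g_0)=g_0$ automatically. The $U_2(\F_q)$-invariance of $g_0$ is then a brief direct expansion in which the linear-in-$c$ error terms from $\updelta_c(A)$ are cancelled by those from $\updelta_c(\uptau(A))$, aided by the Frobenius identity $(x_i - cx_j)^q = x_i^q - cx_j^q$.

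The main content is $\uptau$-invariance of $g_1$ and $g_2$; the $U_2(\F_q)$-invariance is immediate, since each $h_a$ and $f_0$ is a $U_2(\F_q)$-orbit product. Set $S := x_2+x_3$, so $\uptau(S) = -S$ and each $h_a$ depends only on $x_1, x_4, S$. A direct computation gives
\[
\uptau(h_a) \;=\; \prod_{c\in\F_q}\big((c^2-a)x_4 - cS + x_1\big).
\]
For each $c$ with $c^2\ne a$, factoring out $c^2-a$ rewrites the factor as $(c^2-a)\,\ell_{c',a'}$, where $\ell_{c,a}:=x_4 - cS + (c^2-a)x_1$ is the building block of $h_a$ and $(c',a')=\big(c/(c^2-a),\, a/(c^2-a)^2\big)$. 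Since $a/(c^2-a)^2$ has the same quadratic character as $a$, the map $(c,a)\mapsto (c',a')$ is a bijection on $\F_q\times\A_1$, which yields
\[
\uptau(g_1) \;=\; \Big(\prod_{a\in\A_1}\prod_{c\in\F_q}(c^2-a)\Big)\cdot g_1.
\]
Using the factorization $c^2-a=(c-\sqrt{a})(c+\sqrt{a})$ in $\F_{q^2}$ together with $\prod_{c\in\F_q}(c-\alpha)=\alpha-\alpha^q$, one obtains $\prod_c(c^2-a)=-4a$ for $a\in\A_1$, and the scalar $\prod_{a\in\A_1}(-4a)$ reduces to $1$ via $4^{(q-1)/2}=1$ (Fermat) together with the classical identity $\prod_{a\in\A_1}a=(-1)^{(q-1)/2}$.

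The case of $g_2$ is analogous but more delicate, and this bookkeeping of scalars is where I expect the main obstacle. For $a\in\A_0$, the two degenerate factors $\ell_{\pm\sqrt{a},a}=x_4\mp\sqrt{a}\,S$ of $h_a$ have no $x_1$-contribution, so $\uptau$ sends them to $x_1\pm\sqrt{a}\,S$; multiplying across $a\in\A_0$ gives $\prod_{a\in\A_0}(x_1^2-aS^2)=x_1^{q-1}-S^{q-1}$, mirroring the original product $x_4^{q-1}-S^{q-1}$. The non-degenerate factors ($c^2\ne a$) are handled by the same bijection as for $g_1$. Pairing with the explicit identities $f_0=-S(x_1^{q-1}-S^{q-1})$ and $\uptau(f_0)=S(x_4^{q-1}-S^{q-1})$, the asymmetric contributions cancel exactly, and the residual scalar equals $1$ by the companion Wilson-type identity $\prod_{a\in\A_0}a=-(-1)^{(q-1)/2}$.
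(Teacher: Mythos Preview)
Your argument is correct (modulo a harmless sign slip: from $\uptau(\ell_{c,a})=(c^2-a)\bigl(x_4+\tfrac{c}{c^2-a}S+\tfrac{1}{c^2-a}x_1\bigr)$ one finds $c'=-c/(c^2-a)$ rather than $c/(c^2-a)$, but this affects neither the bijection nor the scalar), and your $g_2$-sketch, though terse, checks out once one notes $\prod_{c^2\ne a}(c^2-a)=-1/(4a)$ for $a\in\A_0$. However, your route for $g_1$ and $g_2$ differs substantially from the paper's. The paper establishes that each $g_i$ is a product over a projectively $\SL_2(\F_q)$-stable set $\Delta_i$ of linear forms, so that $\upsigma(g_i)=\chi_i(\upsigma)\,g_i$ for some multiplicative character $\chi_i$ of $\SL_2(\F_q)$; since $\SL_2(\F_q)$ is perfect for $q>3$, every such character is trivial and no scalar bookkeeping is needed. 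Your approach instead computes the scalar in $\uptau(g_i)=b_i\,g_i$ directly via Wilson-type identities, which is more laborious but entirely self-contained and independent of any structural fact about $\SL_2(\F_q)$. The paper's method has the further advantage that the projective-orbit description of $g_1$ is reused verbatim in the next proposition to show that $\{f_2,f_3,g_0,g_1\}$ is a homogeneous system of parameters, whereas your explicit-scalar method would in principle extend to groups with nontrivial abelianization.
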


\begin{proof}
Note that $f_2$ and $f_3$ are $U_2(\F_q)$-invariants, and clearly, $\uptau(f_2)=f_2$ and $\uptau(f_3)=f_3$. Thus $f_2$ and $f_3$ are also $\SL_2(\F_q)$-invariants. To prove the $\SL_2(\F_q)$-invariance of $g_0$, it is easy to see that $\uptau(g_0)=g_0$, thus it suffices to verify that $\updelta_c(g_0)=g_0$ for all $c\in\F_q$. In fact,
\begin{eqnarray*}
\updelta_c(g_0)& = & \updelta_c(x_1^qx_4 + x_1x_4^q - x_2^qx_3 -x_2x_3^q) \\
 & = & x_1^q(x_4-cx_3-cx_2+c^2x_1) + x_1(x_4-cx_3-cx_2+c^2x_1)^q \\
 &&- (x_2-cx_1)^q(x_3-cx_1) -(x_2-cx_1)(x_3-cx_1)^q\\
 &=&x_1^qx_4 + x_1x_4^q - x_2^qx_3 -x_2x_3^q=g_0.
\end{eqnarray*}
Hence, $g_0$ is also fixed by $\SL_2(\F_q)$.

Now let us show that $g_1$ and $g_2$ are $\SL_2(\F_q)$-invariants as well. We first \textit{claim} that 
$g_1$ and $g_2$ both are projective orbit products of $\SL_2(\F_q)$ acting on the set of lines in the dual of $M_2(\F_q)$. 
Hence, for $i\in\{1,2\}$, there exists $b_i\in\F_q^\times$ such that
$$\uptau(g_i) =b_i\cdot g_i.$$
Note that $g_i$ is fixed by $U_2(\F_q)$ as each $h_a$ is $U_2(\F_q)$-invariant, thus the action of $\SL_2(\F_q)$ on $g_i$  induces a multiplicative character of $\SL_2(\F_q)$ over $\F_q$. However, for $q\notin\{2,3\}$, it is well-known that $\SL_2(\F_q)$ is a simple group, thus this character must be trivial, i.e., $b_i=1$. This also proves that $g_1$ and $g_2$ are $\SL_2(\F_q)$-invariants.

Hence, it suffices to prove the claim above.  Let us define 
\begin{equation}
\label{ }
\ell_a(c):=x_4-cx_3-cx_2+(c^2-a)x_1
\end{equation}
and consider the following subset of the dual space of $M_2(\F_q)$:
$$\Delta_1:=\left\{\ell_a(c)\mid a\in\A_1,c\in \F_q\right\}.$$
Apparently, $|\Delta_1|=\frac{q(q-1)}{2}$ because there exists a bijective map between $\A_1\times \F_q$ and $\Delta_1$. Note that for each $\ell_a(c)\in \Delta_1$, since $a\in\A_1$, the coefficient $c^2-a$ of $x_1$ in $\ell_a(c)$ is nonzero. Moreover, 
\begin{eqnarray*}
\uptau(\ell_a(c)) & = &\uptau\left(x_4-cx_3-cx_2+(c^2-a)x_1\right) =x_1+cx_2+cx_3+(c^2-a)x_4 \\ 
&=&(c^2-a)\left(x_4-\frac{-c}{c^2-a}x_3-\frac{-c}{c^2-a}x_2+\frac{1}{c^2-a} x_1\right)\\
&=&(c^2-a)\left(x_4-\frac{-c}{c^2-a}x_3-\frac{-c}{c^2-a}x_2+\left(\left(\frac{-c}{c^2-a}\right)^2-\frac{a}{(c^2-a)^2}\right) x_1\right)\\
&=&(c^2-a)\cdot \ell_{b}(d)
\end{eqnarray*}
where $b:=\frac{a}{(c^2-a)^2}\in\A_1$ (because of $a\in\A_1$) and $d:=\frac{-c}{c^2-a}$.
This observation, together with the fact that $\ell_a(c)=\updelta_c(x_4-ax_1)=\updelta_c(\ell_a(0))$, implies that
$\SL_2(\F_q)$ permutes $\Delta_1$, up to nonzero scalars. In terms of projective geometry, this means that the set of projective points corresponding to $\Delta_1$ is an orbit of the projective points corresponding to $\{\ell_a(0)\mid a\in\A_1\}$ under the action of $\SL_2(\F_q)$. Hence, $g_1=\prod_{a\in\A_1}h_a=\prod_{\ell\in \Delta_1}\ell$
is a projective orbit product of $\SL_2(\F_q)$.

Similarly, we define 
$$\Delta_0:=\{x_3+x_2-cx_1\mid c\in\F_q\}\cup\left\{\ell_a(c)\mid a\in\A_0,c\in \F_q\right\}.$$
Clearly, $|\Delta_0|=q+\frac{q(q-1)}{2}=\frac{q(q+1)}{2}$ and 
$g_2=\prod_{\ell\in \Delta_0}\ell.$ To show $g_2$ is a projective orbit product of $\SL_2(\F_q)$, we only need to show that
the action  of $\SL_2(\F_q)$ on $\Delta_0$ is projectively stable (i.e., $\SL_2(\F_q)$ permutes $\Delta_0$ up to nonzero scalars). We denote by $[\ell]$ the projective point corresponding to a linear function $\ell$ and write $[\Delta_0]$ for
$\{[\ell]\mid \ell\in \Delta_0\}$. Recall that $\{x_3+x_2-cx_1\mid c\in\F_q\}$ is the orbit of $x_3+x_2$ under the action of $U_2(\F_q)$ and  $\uptau(x_3+x_2)=-(x_3+x_2)$, thus the orbit of $[x_3+x_2]$ is contained in  $[\Delta_0]$.
For $c\neq 0$, we observe that
$$\uptau(x_3+x_2-cx_1)=(-c)\left(x_4-(-\frac{1}{c})(x_3+x_2)+\left((-\frac{1}{c})^2-\frac{1}{c^2}\right)x_1\right)=(-c)\cdot\ell_{1/c^2}(-\frac{1}{c}).$$
Thus the orbit of $[x_3+x_2-cx_1]$ (for each $c\in\F_q$) is also contained in $[\Delta_0]$.
Moreover, for each $\ell_a(c)\in \Delta_0$, note that $\updelta_{c}^{-1}(\ell_a(c))=\updelta_{c}^{-1}(\updelta_c(\ell_a(0)))=\ell_a(0)\in \Delta_0$ and $ \uptau(\ell_a(c)) =(c^2-a)\cdot \ell_{b}(d)$
where $b=\frac{a}{(c^2-a)^2}$  and $d=\frac{-c}{c^2-a}$. Since $a\in A_0$, it follows that $b\in\A_0$ and
$\ell_{b}(d)\in \Delta_0$. The projective orbit of each $\ell_a(c)\in \Delta_0$ is contained in $[\Delta_0]$.
Therefore, the action  of $\SL_2(\F_q)$ on $\Delta_0$ is projectively stable, as desired. 
\end{proof}

We denote by $\widetilde{\SL}_2(\F_q)$ the subgroup of $\GL_4(\F_q)$ generated by $\upvarphi(\SL_2(\F_q))$ and $\upalpha$. We observe that  
$|\widetilde{\SL}_2(\F_q)|=2\cdot|\upvarphi(\SL_2(\F_q))|=q^3-q$.

\begin{prop}
$\F_q[M_2(\F_q)]^{\widetilde{\SL}_2(\F_q)}$ is a polynomial algebra, generated by $\{f_2,f_3,g_0,g_1\}$.
\end{prop}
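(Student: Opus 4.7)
The plan is to mirror the strategy of Proposition \ref{prop2.2}: we exhibit $\{f_2, f_3, g_0, g_1\}$ as a homogeneous system of parameters of $\F_q[M_2(\F_q)]^{\widetilde{\SL}_2(\F_q)}$ whose product of degrees equals $|\widetilde{\SL}_2(\F_q)|$, and then invoke \cite[Proposition 16]{Kem96} (equivalently \cite[Corollary 3.1.6]{CW11}) to conclude the invariant ring is the polynomial algebra they generate.

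The first step is to upgrade the $\SL_2(\F_q)$-invariance of $\{f_2, f_3, g_0, g_1\}$ (established in Lemma \ref{lem3.1}) to $\widetilde{\SL}_2(\F_q)$-invariance. Since $\widetilde{\SL}_2(\F_q)$ is generated by $\upvarphi(\SL_2(\F_q))$ and $\upalpha$, it suffices to check that $\upalpha$ fixes each of the four generators. Recalling that $\upalpha$ fixes $x_1, x_4$ and sends $x_2 \mapsto -x_3$, $x_3 \mapsto -x_2$, the invariance of $f_2$ and $f_3$ is immediate. For $g_0$, using that $q$ is odd so $(-1)^q = -1$, a direct substitution yields $\upalpha(g_0) = g_0$. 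For $g_1 = \prod_{a \in \A_1} h_a$, I would verify that $\upalpha(h_a) = h_a$ for each $a$: applying $\upalpha$ replaces the factor $x_4 - c x_3 - c x_2 + (c^2 - a) x_1$ by $x_4 - (-c) x_2 - (-c) x_3 + ((-c)^2 - a) x_1$, and reindexing $c \mapsto -c$ over $\F_q$ recovers $h_a$.

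The second step, which I expect to be the main technical obstacle, is the hsop check via \cite[Lemma 2.6.3]{CW11}. Let $v = (a_1, a_2, a_3, a_4)\in \overline{\F}_q^4$ be a common zero of $\{f_2, f_3, g_0, g_1\}$. From $f_2(v)=0$ we get $a_2 = a_3$, and then $f_3(v)=0$ forces $a_1 a_4 = a_2^2$. If $a_1 = 0$, then $a_2 = a_3 = 0$, and $g_1(v) = \prod_{a \in \A_1}\prod_{c \in \F_q} a_4 = a_4^{q(q-1)/2}$, giving $a_4 = 0$. If instead $a_1 \neq 0$, then $a_4 = a_2^2/a_1$; substituting into $g_0$ and clearing denominators by $a_1^{q-1}$ yields
\[
a_1^{q-1} g_0(v) \;=\; a_2^2\bigl(a_1^{q-1} - a_2^{q-1}\bigr)^2,
\]
so $g_0(v)=0$ forces either $a_2 = 0$ (whence $a_3 = a_4 = 0$) or $a_2 = \lambda a_1$ for some $\lambda \in \F_q^\times$ (whence $v = a_1(1,\lambda,\lambda,\lambda^2)$). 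In either remaining case, substituting into $h_a(v)$ and performing the change of variable $c \mapsto c - \lambda$ reduces the product to $a_1^q \prod_{c \in \F_q}(c^2 - a)$; for $a \in \A_1$ one has $a^{(q-1)/2} = -1$, so this product equals $-4 a\, a_1^q \neq 0$, and hence $g_1(v) \neq 0$, a contradiction. Therefore $v = 0$, confirming the hsop property.

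The third and fourth steps are short. The degree product is
\[
\deg(f_2)\cdot \deg(f_3)\cdot \deg(g_0)\cdot \deg(g_1) \;=\; 1 \cdot 2 \cdot (q+1) \cdot \tfrac{q(q-1)}{2} \;=\; q^3 - q \;=\; |\widetilde{\SL}_2(\F_q)|.
\]
Combined with the hsop property and the inclusion $\F_q[f_2, f_3, g_0, g_1] \subseteq \F_q[M_2(\F_q)]^{\widetilde{\SL}_2(\F_q)}$, \cite[Corollary 3.1.6]{CW11} yields the equality and polynomiality simultaneously. The delicate part of the argument is the case analysis in the hsop verification, particularly the algebraic identity that lets $g_0$ efficiently detect the ratio $a_2/a_1$ when $a_1 \neq 0$.
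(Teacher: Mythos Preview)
Your proof is correct and follows the same overall scaffold as the paper: verify $\upalpha$-invariance of the four generators, check that the product of their degrees equals $|\widetilde{\SL}_2(\F_q)|$, establish the hsop property via \cite[Lemma 2.6.3]{CW11}, and conclude by \cite[Corollary 3.1.6]{CW11}. The one place where your argument genuinely diverges from the paper is the hsop verification.

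The paper exploits the fact that $g_1$ is a projective $\SL_2(\F_q)$-orbit product and that the common zero locus of $\{f_2,f_3,g_0,g_1\}$ is $\SL_2(\F_q)$-stable: after translating $v$ by a suitable group element one may assume some linear factor $\ell_a(v)=0$, i.e.\ $a_4=aa_1$ with $a\in\A_1$, and then $g_0(v)=0$ quickly forces $(a^{(q-1)/2}-1)a_1^{q+1}=0$, hence $a_1=0$. You instead proceed by direct computation without invoking the group action: you first extract the factorization $a_1^{q-1}g_0(v)=a_2^2(a_1^{q-1}-a_2^{q-1})^2$ to pin $v$ down to the form $a_1(1,\lambda,\lambda,\lambda^2)$ with $\lambda\in\F_q$, and then evaluate $h_a$ explicitly, obtaining $h_a(v)=-4a\,a_1^q\neq 0$ for $a\in\A_1$, contradicting $g_1(v)=0$. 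The paper's route is shorter and more conceptual, reusing the orbit structure already established in Lemma~\ref{lem3.1}; your route is self-contained and yields the pleasant closed-form identity for $g_0$ along the diagonal $a_2=a_3$, $a_1a_4=a_2^2$, at the cost of one extra explicit product computation. (One tiny wording point: your ``either remaining case'' covers $\lambda=0$ as well, so it would be cleaner to write $\lambda\in\F_q$ rather than $\F_q^\times$ from the start.)
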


\begin{proof}
First of all, we need to check the $\widetilde{\SL}_2(\F_q)$-invariances of these four polynomials. Since they are invariants of $\SL_2(\F_q)$ by Lemma \ref{lem3.1}, it suffices to prove that they are fixed by the action of $\upalpha$. By Proposition \ref{prop2.2}, we see that $f_2$ and $f_3$ both are fixed by $\upalpha$. A direct verification shows that $g_0$ is fixed by  $\upalpha$ and as each $h_a$ is fixed by $\upalpha$, thus $g_1$ is fixed by $\upalpha$ as well. This proves that $f_2,f_3,g_0,g_1$ are $\widetilde{\SL}_2(\F_q)$-invariants. 

Moreover, note that the product of the degrees of $\{f_2,f_3,g_0,g_1\}$ is 
$$1\cdot 2\cdot (q+1)\cdot \frac{q(q-1)}{2}=q^3-q=|\widetilde{\SL}_2(\F_q)|.$$ By  \cite[Corollary 3.1.6]{CW11}, it suffices to prove that $\{f_2,f_3,g_0,g_1\}$ is a homogeneous system of parameters. Let us continue to use the notations in Lemma \ref{lem2.1} and 
suppose $v=(a_1,a_2,a_3,a_4)\in \overline{V}$ denotes an arbitrary point in the algebraic set defined by $\{f_2,f_3,g_0,g_1\}$. Let $\ell_a:=\ell_a(0)=x_4-ax_1$ for all $a\in\A_1$. Since $g_1$ is the projective orbit product of $\{\ell_a\mid a\in\A_1\}$ under the action of $\SL_2(\F_q)$, the assumption that $g_1(v)=0$ implies that there exists one element $\upsigma\in \SL_2(\F_q)$ such that $\upsigma (\ell_a) (v)=\ell_a(\upsigma^{-1}(v))=0$ for some $a$. Thus we may assume that $\ell_a(v)=0$, because the vanishing subvariety of $\{f_2,f_3,g_0,g_1\}$ in $\overline{V}$ is $\SL_2(\F_q)$-stable and we may replace  $v$ with $\upsigma^{-1}(v)$ if necessary. 

Therefore,  $a_4=aa_1.$
The fact that $f_2(v)=f_3(v)=0$ shows that 
$a_3=a_2$ and $a_2^2=a_1a_4=aa_1^2.$  By $g_0(v)=0$, we see that
$2aa_1^{q+1}=2a_2^{q+1}$. Thus $aa_1^{q+1}=a_2^{q+1}=(a_2^2)^{\frac{q+1}{2}}=(aa_1^2)^{\frac{q+1}{2}}=a^{\frac{q+1}{2}}a_1^{q+1}$, which implies that 
$$(a^{\frac{q-1}{2}}-1)a_1^{q+1}=0$$ 
because of $a\neq 0$. Since $a\in\A_1$, it follows that $a^{\frac{q-1}{2}}\neq 1$ and so $a_1^{q+1}=0$ in $\overline{\F}_q$. Hence, $a_1=0$. Together the relations among $\{a_1,a_2,a_3,a_4\}$ obtained above, we see that
$a_2=a_3=a_4=0$. Hence, $v=0$ and \cite[Lemma 2.6.3]{CW11} applies to complete the proof.
\end{proof}

\begin{thm}\label{thm2}
$\F_q[M_2(\F_q)]^{\SL_2(\F_q)}=\F_q[f_2,f_3,g_0,g_1,g_2]$ is a hypersurface.
\end{thm}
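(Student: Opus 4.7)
The plan is to mirror the strategy used for $U_2(\F_q)$ in Section \ref{sec2}: exhibit $\F_q[M_2(\F_q)]^{\widetilde{\SL}_2(\F_q)}=\F_q[f_2,f_3,g_0,g_1]$ as a Noether normalization of $\F_q[M_2(\F_q)]^{\SL_2(\F_q)}$, pin down the degree of the extra module generator via an $a$-invariant computation based on \cite[Theorem 4.4]{GJS25}, and then transfer the hypersurface conclusion along the lines of Corollary \ref{coro1}.

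First I would establish that $\F_q[M_2(\F_q)]^{\SL_2(\F_q)}$ is Cohen-Macaulay. Since $U_2(\F_q)$ is a Sylow $p$-subgroup of $\SL_2(\F_q)$ and $\F_q[M_2(\F_q)]^{U_2(\F_q)}$ is already known to be Cohen-Macaulay (Corollary \ref{coro1}), the standard Sylow-transfer argument (the inclusion $\F_q[V]^{\SL_2(\F_q)}\hookrightarrow \F_q[V]^{U_2(\F_q)}$ splits via the relative transfer since $[\SL_2(\F_q):U_2(\F_q)]$ is coprime to $p$) yields the claim. By the previous Proposition together with \cite[Corollary 3.0.6]{CW11}, $\{f_2,f_3,g_0,g_1\}$ is a homogeneous system of parameters for $\F_q[M_2(\F_q)]^{\SL_2(\F_q)}$, and \cite[Corollary 3.1.4]{CW11} shows the latter is free of rank $|\widetilde{\SL}_2(\F_q)|/|\upvarphi(\SL_2(\F_q))|=2$ over $\F_q[f_2,f_3,g_0,g_1]$. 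Thus the Hironaka decomposition takes the form
\begin{equation*}
\F_q[M_2(\F_q)]^{\SL_2(\F_q)}=\F_q[f_2,f_3,g_0,g_1]\oplus\F_q[f_2,f_3,g_0,g_1]\cdot s
\end{equation*}
for some homogeneous $\SL_2(\F_q)$-invariant $s$, giving the Hilbert series $(1+\uplambda^{\deg(s)})/[(1-\uplambda)(1-\uplambda^2)(1-\uplambda^{q+1})(1-\uplambda^{q(q-1)/2})]$.

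The crucial step is to pin down $\deg(s)$ through the $a$-invariant. The input needed is that $\upvarphi(\SL_2(\F_q))$ acts on $M_2(\F_q)$ without reflections. For any $g\in\SL_2(\F_q)$ the transpose action $M\mapsto gMg^t$ is the tensor product $g\otimes g^t$, so its eigenvalues on $M_2(\F_q)\otimes_{\F_q}\overline{\F}_q$ are $\{\lambda_1^2,\lambda_1\lambda_2,\lambda_2\lambda_1,\lambda_2^2\}=\{\lambda_1^2,1,1,\lambda_2^2\}$, where $\lambda_1,\lambda_2\in\overline{\F}_q$ are the eigenvalues of $g$ (using $\lambda_1\lambda_2=\det g=1$). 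A reflection would require eigenvalue $1$ with multiplicity three, forcing $\lambda_1^2=1$ or $\lambda_2^2=1$, which in turn forces $g=\pm I_2\in\ker(\upvarphi)$; the non-semisimple strata (unipotent and $-I_2$ times unipotent) are handled directly by the rank computation $\rank([\updelta_c]-I_4)=2$ from the proof of Theorem \ref{thm1}. Hence \cite[Theorem 4.4]{GJS25} gives that the $a$-invariant of $\F_q[M_2(\F_q)]^{\SL_2(\F_q)}$ coincides with that of $\F_q[M_2(\F_q)]$, namely $-4$. Serre's formula \cite[Theorem 4.4.3]{BH93} applied to the Hilbert series above then yields $\deg(s)-(4+q(q+1)/2)=-4$, so $\deg(s)=q(q+1)/2$.

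Since $\deg(g_2)=q+\tfrac{q-1}{2}\cdot q=q(q+1)/2$ matches $\deg(s)$, only $g_2\notin \F_q[f_2,f_3,g_0,g_1]$ remains to verify. A direct computation shows $\upalpha(f_0)=(-1)^q f_0=-f_0$ (using $p\geqslant 3$ and the substitution $c\mapsto -c$) while each $h_a$ is $\upalpha$-fixed, so $\upalpha(g_2)=-g_2$, whence $g_2\notin\F_q[M_2(\F_q)]^{\widetilde{\SL}_2(\F_q)}$. Then $\{1,g_2\}$ is $\F_q[f_2,f_3,g_0,g_1]$-linearly independent by the same fraction-field argument as in the proof of Theorem \ref{thm1}, the submodule $N=\F_q[f_2,f_3,g_0,g_1]\oplus\F_q[f_2,f_3,g_0,g_1]\cdot g_2$ has the same Hilbert series as $\F_q[M_2(\F_q)]^{\SL_2(\F_q)}$, and equality of these two graded modules gives $\F_q[M_2(\F_q)]^{\SL_2(\F_q)}=\F_q[f_2,f_3,g_0,g_1,g_2]$. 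The hypersurface conclusion follows verbatim from the argument of Corollary \ref{coro1}: the surjection $\F_q[X_1,\dots,X_5]\twoheadrightarrow\F_q[M_2(\F_q)]^{\SL_2(\F_q)}$ has prime kernel of height $1$ in a factorial Noetherian ring, hence principal. The main obstacle will be the uniform no-reflections verification, since \cite[Theorem 4.4]{GJS25} drives the entire $a$-invariant calculation; the eigenvalue-on-tensor-product argument is clean for diagonalizable $g$ but the non-semisimple elements require a separate direct check, and one must remain careful about the exclusions $q=3$ and $p=2$ that were imposed at the top of the section.
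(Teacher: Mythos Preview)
Your proposal is correct and follows essentially the same route as the paper: Cohen--Macaulayness via the Sylow $p$-subgroup argument, rank-$2$ freeness over $\F_q[f_2,f_3,g_0,g_1]$, the $a$-invariant determination through \cite[Theorem 4.4]{GJS25} to force $\deg(s)=q(q+1)/2$, the Hilbert-series match with $g_2$, and the hypersurface conclusion as in Corollary~\ref{coro1}. Your no-reflections check via the eigenvalue structure $\{\lambda_1^2,1,1,\lambda_2^2\}$ of the tensor action (plus the separate rank computation for the non-semisimple elements) is in fact more complete than the paper's brief appeal to $\rank(\uptau\cdot[\updelta_c]-I_4)>1$, and your explicit verification $\upalpha(f_0)=-f_0$ makes the $g_2\notin\F_q[M_2(\F_q)]^{\widetilde{\SL}_2(\F_q)}$ step transparent.
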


\begin{proof}
Since $\F_q[M_2(\F_q)]^{\widetilde{\SL}_2(\F_q)}$ is a Noether normalization of $\F_q[M_2(\F_q)]^{\SL_2(\F_q)}$, it follows from  \cite[Corollary 3.1.4]{CW11}  that $\F_q[M_2(\F_q)]^{\SL_2(\F_q)}$ is a free module over $\F_q[M_2(\F_q)]^{\widetilde{\SL}_2(\F_q)}$ of rank 
$$\frac{|\widetilde{\SL}_2(\F_q)|}{|\upvarphi(\SL_2(\F_q))|}=\frac{q^3-q}{(q^3-q)/2}=2.$$
Thus, we may choose a nonzero $s\in \F_q[M_2(\F_q)]^{\SL_2(\F_q)}\setminus \F_q[M_2(\F_q)]^{\widetilde{\SL}_2(\F_q)}$
 such that $\{1,s\}$ is a basis of $\F_q[M_2(\F_q)]^{\SL_2(\F_q)}$ over $\F_q[M_2(\F_q)]^{\widetilde{\SL}_2(\F_q)}$.
Recall that $U_2(\F_q)$ can be viewed as a Sylow $p$-group of $\SL_2(\F_q)$ and we have proved that $\F_q[M_2(\F_q)]^{U_2(\F_q)}$ is hypersurface, it follows from \cite[Theorem 1]{CHP91} that $\F_q[M_2(\F_q)]^{\SL_2(\F_q)}$ is Cohen-Macaulay.  
Hence, the Hilbert series of $\F_q[M_2(\F_q)]^{\SL_2(\F_q)}$ can be written as
$$\mathcal{H}\left(\F_q[M_2(\F_q)]^{\SL_2(\F_q)}, \uplambda\right)=\frac{1+\uplambda^{\deg(s)}}{(1-\uplambda)(1-\uplambda^2)(1-\uplambda^{q+1})\left(1-\uplambda^{\frac{q(q-1)}{2}}\right)}.$$

By a theorem of Serre (see \cite[Theorem 4.4.3]{BH93}),  the $a$-invariant of $\F_q[M_2(\F_q)]^{\SL_2(\F_q)}$ equals 
$\deg(s)-\left(1+2+(q+1)+\frac{q(q-1)}{2}\right)=\deg(s)-\left(\frac{q(q-1)}{2}+q+4\right)$. Note that $\rank(\uptau\cdot [\updelta_c]-I_4)>1$ for all $c\in\F_q$, thus $\upvarphi(\SL_2(\F_q))$ contains no reflections. It follows from 
\cite[Theorem 4.4]{GJS25} that the $a$-invariants of $\F_q[M_2(\F_q)]^{\SL_2(\F_q)}$ and $\F_q[M_2(\F_q)]$ are equal to each other and the latter is $-4$. Thus
$$\deg(s)=\frac{q(q-1)}{2}+q=\frac{q(q+1)}{2}$$
and the Hilbert series of $\F_q[M_2(\F_q)]^{\SL_2(\F_q)}$ is
$$\mathcal{H}\left(\F_q[M_2(\F_q)]^{\SL_2(\F_q)}, \uplambda\right)=\frac{1+\uplambda^{\frac{q(q+1)}{2}}}{(1-\uplambda)(1-\uplambda^2)(1-\uplambda^{q+1})\left(1-\uplambda^{\frac{q(q-1)}{2}}\right)}.$$

Note that $\deg(g_2)=\frac{q(q-1)}{2}$ and $g_2\notin \F_q[M_2(\F_q)]^{\widetilde{\SL}_2(\F_q)}$ (a direct verification shows that $g_2$ is not fixed by $\upalpha$). A similar argument as in Theorem \ref{thm1} and Corollary \ref{coro1} proves that 
$$\F_q[M_2(\F_q)]^{\SL_2(\F_q)}=\F_q[M_2(\F_q)]^{\widetilde{\SL}_2(\F_q)}\oplus\F_q[M_2(\F_q)]^{\widetilde{\SL}_2(\F_q)}\cdot  g_2=\F_q[f_2,f_3,g_0,g_1,g_2]$$
 is a hypersurface.
\end{proof}

\begin{rem}\label{rem3.4}{\rm
When $q=3$, a magma calculation \cite{BCP97} shows that $\F_3[M_2(\F_3)]^{\SL_2(\F_3)}$ is also a hypersurface generated by 
$\{f_2,f_3,g_0,k_1,k_2\}$, where $\deg(k_1)=\deg(g_1)=3$ and  $\deg(k_2)=\deg(g_2)=6$.  When $q=2$, magma reports  that $\F_2[M_2(\F_2)]^{\SL_2(\F_2)}$ is a hypersurface generated by 
$\{f_2,f_3,g_0,k_1,k_2\}$, where $\deg(k_1)=2$ and  $\deg(k_2)=4$. 
\hbo}\end{rem}

We close this article with a discussion about $\F_q[M_2(\F_q)]^{\SL_2(\F_q)}$ in even characteristic.  The method in odd characteristics above also works for exploring the structure of $\F_q[M_2(\F_q)]^{\SL_2(\F_q)}$. Detailed proofs are similar and will be deleted. We assume that $q=2^s$ for some $s\geqslant 2$ below.  

Note that $\SL_2(\F_q)$ is generated by $U_2(\F_q)$ and $\uptau$ and the action of $\SL_2(\F_q)$ on $M_2(\F_q)$ is faithful. Thus 
$|\upvarphi(\SL_2(\F_q))|= |\SL_2(\F_q)|=q(q^2-1)$ and the subgroup $\widetilde{\SL}_2(\F_q)$ of $\GL_4(\F_q)$ (generated by $\upvarphi(\SL_2(\F_q))$ and $\upalpha$) has order 
$|\widetilde{\SL}_2(\F_q)|=2\cdot|\upvarphi(\SL_2(\F_q))|=2q(q^2-1)$.
Define the following polynomial:
$$k_1:=\prod_{a\in\F_q^\times}h_a.$$
Note that $\deg(k_1)=q(q-1)$. As each nonzero element $a\in\F_q^\times$ is a quadratic residue, thus
$k_1$ is actually a projective orbit product of $\SL_2(\F_q)$ acting on the set of lines in the dual of $M_2(\F_q)$.
Since $q>3$, by the simplicity of $\SL_2(\F_q)$, we see that $k_1$ is an $\SL_2(\F_q)$-invariant. 
Moreover, $k_1$ is also an $\widetilde{\SL}_2(\F_q)$-invariant and
$\F_q[M_2(\F_q)]^{\widetilde{\SL}_2(\F_q)}=\F_q[f_2,f_3,g_0,k_1]$ is a polynomial algebra.

We may write $\F_q[M_2(\F_q)]^{\SL_2(\F_q)}=\F_q[M_2(\F_q)]^{\widetilde{\SL}_2(\F_q)}\oplus \F_q[M_2(\F_q)]^{\widetilde{\SL}_2(\F_q)}\cdot k_2$ for some $\SL_2(\F_q)$-invariant $k_2$ that is not fixed by
$\widetilde{\SL}_2(\F_q)$. By computing the $a$-invariant of $\F_q[M_2(\F_q)]^{\SL_2(\F_q)}$, we observe that $\deg(k_2)=q^2.$  Therefore, there exists an $\SL_2(\F_q)$-invariant $k_2$ of degree $q^2$ such that
$\F_q[M_2(\F_q)]^{\SL_2(\F_q)}=\F_q[f_2,f_3,g_0,k_1,k_2]$ is a hypersurface.

\vspace{5mm}
\noindent \textbf{Acknowledgements}. 
The symbolic computation language MAGMA \cite{BCP97} was very helpful. This research was partially supported by NNSF of China (Grant No. 12561003).
The second-named author would like to thank Professor Runxuan Zhang for her help and encouragement.  
The authors would like to thank the anonymous referees and the editor for their careful reading, constructive comments, and suggestions.

\begin{bibdiv}
  \begin{biblist}
  
  \bib{Ang98}{article}{
   author={Anghel, Nicolae},
   title={${\rm SL}_2$-polynomial invariance},
   journal={Rev. Roumaine Math. Pures Appl.},
   volume={43},
   date={1998},
   number={1-2},
   pages={17--46},
}

\bib{BCP97}{article}{
   author={Bosma, Wieb},
   author={Cannon, John},
   author={Playoust, Catherine},
   title={The Magma algebra system. I. The user language},
   journal={J. Symbolic Comput.},
   volume={24},
   date={1997},
   number={3-4},
   pages={235--265},
}

\bib{BK11}{article}{
   author={Bonnaf\'{e}, C\'{e}dric},
   author={Kemper, Gregor},
   title={Some complete intersection symplectic quotients in positive
   characteristic: invariants of a vector and a covector},
   journal={J. Algebra},
   volume={335},
   date={2011},
   pages={96--112},
}

  \bib{Bro06}{article}{
   author={Broer, Abraham},
   title={Hypersurfaces in modular invariant theory},
   journal={J. Algebra},
   volume={306},
   date={2006},
   number={2},
   pages={576--590},
}

\bib{BH93}{book}{
   author={Bruns, Winfried},
   author={Herzog, J\"{u}rgen},
   title={Cohen-Macaulay rings},
   series={Cambridge Studies in Advanced Mathematics},
   volume={39},
   publisher={Cambridge University Press, Cambridge},
   date={1993},
}

\bib{CH99}{article}{
   author={Campbell, H. E. A.},
   author={Hughes, Ian},
   title={Rings of invariants of certain $p$-groups over the field $\F_p$},
   journal={J. Algebra},
   volume={211},
   date={1999},
   number={2},
   pages={549--561},
}

\bib{CHP91}{article}{
   author={Campbell, H. E. A.},
   author={Hughes, Ian},
   author={Pollack, R. D.},
   title={Rings of invariants and $p$-Sylow subgroups},
   journal={Canad. Math. Bull.},
   volume={34},
   date={1991},
   number={1},
   pages={42--47},
}

\bib{CW11}{book}{
   author={Campbell, H. E. A.},
   author={Wehlau, David L.},
   title={Modular invariant theory},
   series={Encyclopaedia of Mathematical Sciences},
   volume={139},
   publisher={Springer-Verlag, Berlin},
   date={2011},
}

\bib{Che14}{article}{
   author={Chen, Yin},
   title={On modular invariants of a vector and a covector},
   journal={Manuscripta Math.},
   volume={144},
   date={2014},
   number={3-4},
   pages={341--348},
}

\bib{Che21}{article}{
   author={Chen, Yin},
   title={Relations between modular invariants of a vector and a covector in
   dimension two},
   journal={Canad. Math. Bull.},
   volume={64},
   date={2021},
   number={4},
   pages={820--827},
}

\bib{CT19}{article}{
   author={Chen, Yin},
   author={Tang, Zhongming},
   title={Vector invariant fields of finite classical groups},
   journal={J. Algebra},
   volume={534},
   date={2019},
   pages={129--144},
}

  \bib{CZ23}{article}{
   author={Chen, Yin},
   author={Zhang, Xinxin},
   title={A class of quadratic matrix equations over finite fields},
   journal={Algebra Colloq.},
   volume={30},
   date={2023},
   number={1},
   pages={169--180},
}

\bib{DP17}{book}{
   author={DeConcini, Corrado},
   author={Procesi, Claudio},
   title={The invariant theory of matrices},
   series={University Lecture Series},
   volume={69},
   publisher={American Mathematical Society, Providence, RI},
   date={2017},
}

\bib{DK15}{book}{
   author={Derksen, Harm},
   author={Kemper, Gregor},
   title={Computational invariant theory},
   series={Encyclopaedia of Mathematical Sciences},
   volume={130},
   edition={Second enlarged edition},
   publisher={Springer, Heidelberg},
   date={2015},
}

\bib{Eis95}{book}{
   author={Eisenbud, David},
   title={Commutative algebra},
   series={Graduate Texts in Mathematics},
   volume={150},
   publisher={Springer-Verlag, New York},
   date={1995},
}

  \bib{GJS25}{article}{
   author={Goel, Kriti},
   author={Jeffries, Jack},
   author={Singh, Anurag K.},
   title={Local cohomology of modular invariant rings},
   journal={To appear in Transform. Groups},
   date={2025},
   pages={https://doi.org/10.1007/s00031-024-09851-6},
}

  \bib{HS11}{article}{
   author={Hobson, Ashley},
   author={Shank, R. James},
   title={The invariants of the second symmetric power representation of
   ${\rm SL}_2(\Bbb F_q)$},
   journal={J. Pure Appl. Algebra},
   volume={215},
   date={2011},
   number={10},
   pages={2481--2485},
}

\bib{HK05}{article}{
   author={Hughes, Ian},
   author={Kechagias, Nondas E.},
   title={Rings of invariants of modular $p$-groups which are hypersurfaces},
   journal={J. Algebra},
   volume={291},
   date={2005},
   number={1},
   pages={72--89},
}

\bib{Kem96}{article}{
   author={Kemper, Gregor},
   title={Calculating invariant rings of finite groups over arbitrary
   fields},
   journal={J. Symbolic Comput.},
   volume={21},
   date={1996},
   number={3},
   pages={351--366},
}

\bib{Mai25}{article}{
   author={Maithani, Aryaman},
   title={Polynomial invariants of ${\rm GL}_2$: conjugation over finite fields},
   journal={\texttt{arXiv:2501.15080}},
   date={2025},
}

\bib{Mat89}{book}{
   author={Matsumura, Hideyuki},
   title={Commutative ring theory},
   series={Cambridge Studies in Advanced Mathematics},
   volume={8},
   edition={2},
   publisher={Cambridge University Press, Cambridge},
   date={1989},
}

\bib{Nak83}{article}{
   author={Nakajima, Haruhisa},
   title={Rings of invariants of finite groups which are hypersurfaces},
   journal={J. Algebra},
   volume={80},
   date={1983},
   number={2},
   pages={279--294},
}

\bib{Ren24}{article}{
   author={Ren, Shan},
   title={Modular invariants of a vector and a covector for some elementary
   abelian $p$-groups},
   journal={Comm. Algebra},
   volume={52},
   date={2024},
   number={11},
   pages={4914--4922},
}

\bib{RZ25}{article}{
   author={Ren, Shan},
   author={Zhang, Runxuan},
   title={Some four-dimensional orthogonal invariants},
   journal={\texttt{arXiv:2504.10841}},
   date={2025},
}

\bib{Smi02}{article}{
   author={Smith, Larry},
   title={Invariants of $2\times 2$-matrices over finite fields},
   journal={Finite Fields Appl.},
   volume={8},
   date={2002},
   number={4},
   pages={504--510},
}

\bib{SS10a}{article}{
   author={Smith, Larry},
   author={Stong, Robert E.},
   title={Invariants of binary bilinear forms modulo two},
   journal={Proc. Amer. Math. Soc.},
   volume={138},
   date={2010},
   number={1},
   pages={17--26},
}

\bib{SS10b}{article}{
   author={Smith, Larry},
   author={Stong, Robert E.},
   title={Poincar\'{e} duality algebras mod two},
   journal={Adv. Math.},
   volume={225},
   date={2010},
   number={4},
   pages={1929--1985},
}

  \end{biblist}
\end{bibdiv}
\raggedright
\end{document}